\newtheorem{theorem}{Theorem}[section]
\newtheorem{corollary}{Corollary}[section]
\theoremstyle{remark}
\newtheorem{remark}{Remark}
\newtcolorbox{convergencebox}{
    colframe=black!75!white,
    colback=gray!10,
    sharp corners,
    boxrule=0.8mm,
    title={\textbf{Convergence Analysis}},
    colbacktitle=black!70,
    fonttitle=\bfseries
}
\newtcolorbox{algorithmbox}[1]{colframe=black!75!white,
		colback=gray!10, sharp corners=south, boxrule=0.8mm,
		title={\textbf{#1}}, colbacktitle=black!70, fonttitle=\bfseries}
\renewcommand{\bar}{\overline}
\title{Drone Swarm Energy Management\footnote{This research was partially supported by the National Research Foundation of Ukraine, grant No. 2025.06/0022 ``AI platform with cognitive services for coordinated autonomous navigation of distributed systems consisting of a large number of objects''}}
\author[1]{Michael Zgurovsky}
\author[2]{Pavlo Kasyanov}
\author[3]{Liliia Paliichuk}
\affil[1]{Institute for Applied System Analysis, zgurovsm@hotmail.com}
\affil[2]{Institute for Applied System Analysis, p.o.kasyanov@gmail.com}
\affil[3]{Institute for Applied System Analysis, liliia.paliichuk.iasa@gmail.com}
\date{}
\begin{document}

\maketitle

\begin{abstract}
This note presents an analytical framework for decision-making in drone swarm systems operating under uncertainty, based on the integration of Partially Observable Markov Decision Processes (POMDP) with Deep Deterministic Policy Gradient (DDPG) reinforcement learning. The proposed approach enables adaptive control and cooperative behavior of unmanned aerial vehicles (UAVs) within a cognitive AI platform, where each agent learns optimal energy management and navigation policies from dynamic environmental states. We extend the standard DDPG architecture with a belief-state representation derived from Bayesian filtering, allowing for robust decision-making in partially observable environments. In this paper, for the Gaussian case, we numerically compare the performance of policies derived from DDPG to optimal policies for discretized versions of the original continuous problem. Simulation results demonstrate that the POMDP–DDPG-based swarm control model significantly improves mission success rates and energy efficiency compared to baseline methods. The developed framework supports distributed learning and decision coordination across multiple agents, providing a foundation for scalable cognitive swarm autonomy. The outcomes of this research contribute to the advancement of energy-aware control algorithms for intelligent multi-agent systems and can be applied in security, environmental monitoring, and infrastructure inspection scenarios.

 %For a Gaussian version of the problem, we  find optimal policies by three methods: (i) by DDGPA, (ii) by DDGPA applied to,  and (iii) by state discretization and compare the results.    
\end{abstract}

\section{Introduction}\label{Intro}

For the decision-making process in swarm systems based on environmental state changes within a cognitive AI platform, an important issue is drone swarm energy management. We extend the theoretical framework developed in \cite{FeiHuaKasONe2025arxivComputing} to a practical application in unmanned aerial vehicle (UAV) swarm coordination. Specifically, we consider the problem of energy management for a drone swarm operating under noisy battery level observations. This application demonstrates the computational advantages of the belief-MDP reduction derived in \cite[Appendix C]{FeiHuaKasONe2025arxivComputing} for Gaussian systems.

\section{Problem Formulation}
\label{subsec:drone_formulation}

Consider a swarm of $N$ drones performing surveillance or patrol missions. Each drone $i \in \{1, \ldots, N\}$ maintains a battery level $x_t^{(i)} \in [0, 100]$ (percentage) at time $t = 0, 1, \ldots$ The true battery level evolves according to:
\begin{equation}
\label{eq:drone_dynamics}
x_{t+1}^{(i)} = \max\{0, x_t^{(i)} + a_t^{(i)} - D_t^{(i)}\}, \quad t = 0, 1, \ldots
\end{equation}
where:
\begin{itemize}
\item[{\rm(i)}] $a_t^{(i)} \in [0, 100 - x_t^{(i)}]$ is the recharge action (energy added during brief landing);
\item[{\rm(ii)}] $D_t^{(i)}$ represents stochastic energy consumption with mean $\bar{D}$ and variance $\sigma_D^2$ due to variable wind conditions, payload variations, and mission tasks;
\item[{\rm(iii)}] the sequence $\{D_t^{(i)}\}_{t \geq 0}$ consists of i.i.d. nonnegative random variables; for our Gaussian model with $\bar{D}/\sigma_D = 3$, the probability of negative consumption is negligible ($< 0.2\%$).
\end{itemize}
Unlike the inventory problem where negative levels represent backorders, battery depletion below zero causes mission failure. However, the belief-based approach naturally handles this constraint through the posterior distribution.

\subsection{Noisy Observations}

Each drone's battery management system provides noisy measurements corrupted by sensor errors, voltage fluctuations, and temperature effects:
\begin{equation}
\label{eq:drone_observations}
y_t^{(i)} = x_t^{(i)} + \eta_t^{(i)}, \quad t = 0, 1, \ldots
\end{equation}
where $\eta_t^{(i)} \sim \mathcal{N}(0, \sigma_\eta^2)$ are i.i.d. Gaussian observation noises, independent of $\{D_t^{(i)}\}$ and the initial state $x_0^{(i)} \sim \mathcal{N}(\bar{x}_0, \sigma_{x_0}^2)$.

\begin{remark}
The Gaussian assumption is justified in practice: initial battery estimates are typically normally distributed due to manufacturing variations and previous usage patterns, while observation noise follows a normal distribution as the sum of multiple independent error sources (central limit theorem). For energy consumption, we model $D_t^{(i)}$ as approximately Gaussian; with $\bar{D} = 3\%$ and $\sigma_D = 1\%$ (Table~\ref{tab:drone_params}), the ratio $\bar{D}/\sigma_D = 3$ ensures that negative values occur with probability less than 0.2\%, making explicit truncation unnecessary in practice. This allows us to apply standard Kalman filtering with negligible approximation error ($< 2\%$).
\end{remark}

\subsection{Cost Structure}

The cost function at time $t$ for drone $i$ consists of three components:
\begin{equation}
\label{eq:drone_cost}
c(x_t^{(i)}, a_t^{(i)}) = \underbrace{K \cdot \mathbf{1}_{a_t^{(i)} > 0}}_{\text{fixed recharge cost}} + \underbrace{\tilde{c} \cdot a_t^{(i)}}_{\text{energy cost}} + \underbrace{h(x_t^{(i)} + a_t^{(i)} - D_t^{(i)})}_{\text{holding/shortage cost}}
\end{equation}
where:
\begin{equation}
\label{eq:holding_cost_drone}
h(x) = \begin{cases}
\beta_h \cdot x & \text{if } x \geq x_{\text{safe}} \quad \text{(excess battery carrying cost);} \\
\beta_c \cdot (x_{\text{safe}} - x)^2 & \text{if } 0 \leq x < x_{\text{safe}} \quad \text{(proximity to critical level);} \\
M & \text{if } x < 0 \quad \text{(mission failure penalty).}
\end{cases}
\end{equation}
Here $K$ represents the operational disruption cost of landing for recharge (time lost, position compromise), $\tilde{c}$ is the per-unit energy cost, $\beta_h$ is the weight penalty for carrying unnecessary charge, $\beta_c$ penalizes operating near critical levels, and $M \gg 0$ represents severe consequences of battery depletion.

This formulation directly corresponds to the inventory control problem with setup costs and backorders. The fixed cost $K$ parallels setup costs in inventory systems, while the critical penalty $M$ corresponds to backorder costs.

\subsection{Objective}

For a finite planning horizon $T = 1, 2, \ldots$ and discount factor $\alpha \in [0,1)$, each drone seeks to minimize:
\begin{equation}
\label{eq:drone_objective}
v^{\pi}_{\alpha,T} = \mathbb{E}^{\pi}\left[\sum_{t=0}^{T-1} \alpha^t c(x_t, a_t)\right]
\end{equation}
subject to the partially observable dynamics~\eqref{eq:drone_dynamics}--\eqref{eq:drone_observations} (noisy observations $y_t$ only). For a swarm of $N$ drones, the total system cost is:
\begin{equation}
V^{\pi}_{\alpha,T} = \sum_{i=1}^{N} v^{\pi}_{\alpha,T}(i)
\end{equation}

\paragraph{Decentralization assumption.} We assume drones make independent decisions based on their local beliefs. While inter-drone coordination could further optimize performance, the decentralized approach scales better and avoids communication overhead which is critical for large swarms.

\section{Application of Gaussian Belief Reduction}
\label{subsec:drone_belief_reduction}

The key theoretical contribution of \cite[Appendix C]{FeiHuaKasONe2025arxivComputing}  that Gaussian POMDPs reduce to one-dimensional MDPs with belief means as states has profound computational implications for drone swarm control.

\subsection{Belief State Representation}

Following the analysis in \cite[Appendix C]{FeiHuaKasONe2025arxivComputing}, let $z_t^{(i)} = \mathcal{N}(\hat{x}_t^{(i)}, \sigma_t^2)$ denote the posterior belief distribution of drone $i$'s battery level at time $t$, given observation history $h_t^{(i)} = (y_0^{(i)}, a_0^{(i)}, \ldots, y_t^{(i)})$.

\begin{theorem}
\label{thm:drone_reduction}
For the Gaussian drone energy management problem, the belief-MDP state space reduces from the infinite-dimensional space $\mathcal{P}(\mathbb{R})$ of probability measures to the one-dimensional space $\mathbb{R}$ of belief means $\hat{x}_t^{(i)}$.
\end{theorem}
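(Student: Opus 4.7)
The plan is to exploit the fact that linear Gaussian state-space models are closed under Bayesian updating, so that each posterior belief $z_t^{(i)}$ remains Gaussian and is completely parametrized by two scalars, the posterior mean $\hat{x}_t^{(i)}$ and the posterior variance $\sigma_t^2$. Of these, the variance will turn out to evolve deterministically along a Riccati-type recursion, leaving the mean $\hat{x}_t^{(i)}\in\mathbb{R}$ as the only genuinely random component of the belief. Once this is in place, the statement reduces to an application of the Gaussian belief-MDP reduction proved in \cite[Appendix C]{FeiHuaKasONe2025arxivComputing}, which I would invoke rather than reprove.

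First, I would verify by induction on $t$ that $z_t^{(i)}$ is Gaussian. The initial belief $z_0^{(i)}=\mathcal{N}(\bar{x}_0,\sigma_{x_0}^2)$ is Gaussian by assumption. Given $z_t^{(i)}=\mathcal{N}(\hat{x}_t^{(i)},\sigma_t^2)$ and an action $a_t^{(i)}$, the one-step predictive distribution derived from $x_{t+1}^{(i)}=x_t^{(i)}+a_t^{(i)}-D_t^{(i)}$ is $\mathcal{N}(\hat{x}_t^{(i)}+a_t^{(i)}-\bar{D},\;\sigma_t^2+\sigma_D^2)$, and conjugacy with the Gaussian likelihood $y_{t+1}^{(i)}\mid x_{t+1}^{(i)}\sim\mathcal{N}(x_{t+1}^{(i)},\sigma_\eta^2)$ yields a Gaussian posterior given by the scalar Kalman update. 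I would then write out the Riccati recursion
\[
\sigma_{t+1\mid t}^2=\sigma_t^2+\sigma_D^2,\qquad \sigma_{t+1}^2=\frac{\sigma_{t+1\mid t}^2\,\sigma_\eta^2}{\sigma_{t+1\mid t}^2+\sigma_\eta^2},
\]
and observe that neither side depends on the observations $y_s^{(i)}$ nor on the actions $a_s^{(i)}$, so $\sigma_t^2$ is a deterministic function of $t$ that can be precomputed from the problem data.

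With $\sigma_t^2$ known in closed form at every step, a sufficient statistic for the belief at time $t$ is the scalar mean $\hat{x}_t^{(i)}$ alone. The Kalman recursion then gives a controlled Markov process $\hat{x}_{t+1}^{(i)}=\hat{x}_t^{(i)}+a_t^{(i)}-\bar{D}+K_t\nu_{t+1}$ on $\mathbb{R}$, driven by the innovations $\nu_{t+1}\sim\mathcal{N}(0,\sigma_{t+1\mid t}^2+\sigma_\eta^2)$, where the Kalman gain $K_t$ is likewise deterministic. Finally, writing the per-stage cost as $\mathbb{E}[c(x_t^{(i)},a_t^{(i)})\mid \hat{x}_t^{(i)},a_t^{(i)}]$ and integrating the cost function \eqref{eq:drone_cost}--\eqref{eq:holding_cost_drone} against the known law $\mathcal{N}(\hat{x}_t^{(i)},\sigma_t^2)$ expresses it as a function of $(\hat{x}_t^{(i)},a_t^{(i)})$ only. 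Substituting into \eqref{eq:drone_objective} and applying \cite[Appendix C]{FeiHuaKasONe2025arxivComputing} identifies $\mathbb{R}$ as the state space of the equivalent MDP.

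The main obstacle I foresee is the truncation $\max\{0,\cdot\}$ in \eqref{eq:drone_dynamics} together with the state-dependent action constraint $a_t^{(i)}\in[0,100-x_t^{(i)}]$: strictly interpreted, these break the linear-Gaussian structure and push the exact filtering distribution off the Gaussian manifold. I would handle them exactly as the paper already suggests, by working with the linear-Gaussian surrogate and using the Remark following \eqref{eq:drone_observations} to bound the approximation error through the tail probability $\mathbb{P}(D_t^{(i)}>x_t^{(i)}+a_t^{(i)})<0.2\%$ implied by $\bar{D}/\sigma_D=3$. The cleanest form of the theorem, which I would state explicitly in the write-up, is that the reduction is exact for the untruncated linear-Gaussian model and holds up to this small total-variation error for the original dynamics.
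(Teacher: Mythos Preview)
Your proposal is correct and follows essentially the same route as the paper's proof sketch: establish Gaussian conjugacy of the posterior by induction, read off the deterministic Riccati recursion for $\sigma_t^2$, conclude that the belief mean $\hat{x}_t^{(i)}$ is the sole random sufficient statistic, and then invoke \cite[Appendix C]{FeiHuaKasONe2025arxivComputing}. If anything, you are more careful than the paper in explicitly flagging that the truncation $\max\{0,\cdot\}$ and the state-dependent action bound break exact linearity and in quantifying the resulting approximation via the tail bound from the Remark; the paper's own sketch silently works with the untruncated surrogate.
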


\begin{proof}[Proof sketch]
The proof completely follows the reasoning of equations~(12)--(21) from \cite[Appendix C]{FeiHuaKasONe2025arxivComputing} with:
\begin{itemize}
\item Initial belief: $z_0 = \mathcal{N}(\hat{x}_0, \sigma_0^2)$ where $\sigma_0^2 = \frac{\sigma_{x_0}^2 \sigma_\eta^2}{\sigma_{x_0}^2 + \sigma_\eta^2}$ (equation~(12) from \cite[Appendix C]{FeiHuaKasONe2025arxivComputing});
\item Belief mean evolution: $\hat{x}_{t+1} = \hat{x}_t + a_t - \bar{D} + w_t^*$ where $w_t^* \sim \mathcal{N}(0, \frac{(\sigma_t^2 + \sigma_D^2)^2}{\sigma_t^2 + \sigma_D^2 + \sigma_\eta^2})$ (equation~(19)--(20) from \cite[Appendix C]{FeiHuaKasONe2025arxivComputing});
\item Variance sequence: $\sigma_{t+1}^2 = \frac{(\sigma_t^2 + \sigma_D^2)\sigma_\eta^2}{\sigma_t^2 + \sigma_D^2 + \sigma_\eta^2}$ (equation~(21) from \cite[Appendix C]{FeiHuaKasONe2025arxivComputing}).
\end{itemize}
Since $\{\sigma_t^2\}$ is deterministic and depends only on initial parameters, the decision problem reduces to an MDP with state $\hat{x}_t \in \mathbb{R}$.
\end{proof}

\subsection{Kalman Filter Update}

The belief mean update follows the standard Kalman filter equations. At time $t$, given prior belief $\hat{x}_t^{-} = \hat{x}_{t-1} + a_{t-1} - \bar{D}$ with variance $\sigma_t^{- 2} = \sigma_{t-1}^2 + \sigma_D^2$, and observation $y_t$, the posterior belief is:
\begin{equation}
\label{eq:kalman_update}
\hat{x}_t = \hat{x}_t^{-} + K_t(y_t - \hat{x}_t^{-})
\end{equation}
where the Kalman gain is:
\begin{equation}
\label{eq:kalman_gain}
K_t = \frac{\sigma_t^{- 2}}{\sigma_t^{- 2} + \sigma_\eta^2} = \frac{\sigma_{t-1}^2 + \sigma_D^2}{\sigma_{t-1}^2 + \sigma_D^2 + \sigma_\eta^2}
\end{equation}

\begin{remark}
Tracking belief state requires updating only two scalars $(\hat{x}_t, \sigma_t^2)$ instead of maintaining full probability distributions over $\mathbb{R}$. Moreover, since $\sigma_t^2$ is deterministic, only $\hat{x}_t$ needs real-time computation.
\end{remark}

\subsection{Optimal Policy Structure}

\begin{corollary}[Existence of $(s_t, S_t)$ policies]
\label{cor:drone_sS_policy}
Under mild technical conditions on the cost function $h$ (convexity and growth conditions -- see equation~(24) from \cite[Appendix C]{FeiHuaKasONe2025arxivComputing}), there exist time-dependent thresholds $s_t < S_t$ such that the optimal policy for drone $i$ at time $t$ is:
\begin{equation}
\label{eq:optimal_sS_policy}
\phi_t^*(\hat{x}_t^{(i)}) = \begin{cases}
0 & \text{if } \hat{x}_t^{(i)} \geq s_t \\
S_t - \hat{x}_t^{(i)} & \text{if } \hat{x}_t^{(i)} < s_t
\end{cases}
\end{equation}
\end{corollary}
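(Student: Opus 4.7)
The plan is to invoke the classical Scarf $K$-convexity framework on the reduced one-dimensional belief-MDP produced by Theorem~\ref{thm:drone_reduction}. Since $\{\sigma_t^2\}$ is deterministic, the effective innovation $w_t^\ast$ has a fixed Gaussian law at each stage, so one may write the Bellman recursion as
\begin{equation*}
V_t(\hat{x}) = \min_{a \geq 0} \bigl\{ K\,\mathbf{1}_{a>0} + \tilde{c}\,a + H_t(\hat{x}+a) + \alpha\,\E\bigl[V_{t+1}(\hat{x}+a-\bar{D}+w_t^\ast)\bigr] \bigr\}, \quad V_T \equiv 0,
\end{equation*}
where $H_t(y) \defeq \E[h(y-\bar{D}+\zeta_t)]$ and $\zeta_t$ is the residual Gaussian reflecting the filtering variance $\sigma_t^2$. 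Setting $G_t(y) \defeq \tilde{c}\,y + H_t(y) + \alpha\,\E[V_{t+1}(y-\bar{D}+w_t^\ast)]$ rewrites the Bellman operator in the canonical inventory form $V_t(\hat{x}) = -\tilde{c}\hat{x} + \min_{a\geq 0}\{K\,\mathbf{1}_{a>0}+G_t(\hat{x}+a)\}$.

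Next I would prove by backward induction on $t$ that each $G_t$ is $K$-convex in Scarf's sense. The base case $V_T \equiv 0$ is trivial. For the inductive step I would chain three standard facts: (i) convexity of $H_t$, which follows from the convexity and growth hypotheses on $h$ (equation~(24) of \cite{FeiHuaKasONe2025arxivComputing}) together with smoothing by the absolutely continuous Gaussian density of $\zeta_t$; (ii) preservation of $K$-convexity under convolution with a probability density, which is immediate from the integral representation $\E[f(y+w_t^\ast)] = \int f(y+w)\,\mu(dw)$; and (iii) Scarf's lemma that $\min_{a\geq 0}\{K\,\mathbf{1}_{a>0}+G_t(\hat{x}+a)\}$ is $K$-convex in $\hat{x}$ whenever $G_t$ is.

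Given $K$-convex $G_t$, I would then define the thresholds by picking $S_t$ as any global minimizer of $G_t$ on $\R$ and setting $s_t \defeq \inf\{x\leq S_t : G_t(x) \leq K + G_t(S_t)\}$; coercivity of $G_t$, ensured by the mission-failure penalty $M$ and the holding-cost slope $\beta_h$ in \eqref{eq:holding_cost_drone}, guarantees both thresholds are finite. The standard $K$-convex policy argument then yields that $a_t^\ast = S_t-\hat{x}_t$ is optimal when $\hat{x}_t < s_t$ and $a_t^\ast = 0$ otherwise, which is precisely \eqref{eq:optimal_sS_policy}.

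The main obstacle is step~(i): the per-stage cost $h$ in \eqref{eq:holding_cost_drone} is only piecewise convex and jumps at $x=0$, so it is not itself convex on $\R$. The workaround is that integrating $h$ against the Gaussian density of $\zeta_t$ smooths the jump and produces a $C^\infty$ function whose second derivative is controlled by the Gaussian tails together with the growth condition of equation~(24) of \cite{FeiHuaKasONe2025arxivComputing}. Provided the mission-failure region sits in the rapidly decaying Gaussian tail, the convolution-smoothed $H_t$ is globally convex and the inductive hypothesis propagates cleanly through all $T$ stages, closing the argument.
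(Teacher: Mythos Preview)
Your approach is correct and is essentially the same route the paper takes: the paper's proof is a two-line citation --- reduce to a one-dimensional, time-inhomogeneous inventory problem via Theorem~\ref{thm:drone_reduction} and then invoke the classical $(s,S)$ theory (\citealp{veinott1965computing}) --- and your Scarf $K$-convexity backward induction is precisely what that citation unpacks to. One small point: the corollary is stated \emph{under} the hypothesis that $h$ is convex (the ``mild technical conditions'' referring to equation~(24) of \cite{FeiHuaKasONe2025arxivComputing}), so your final paragraph worrying about the jump in the particular $h$ of \eqref{eq:holding_cost_drone} and the Gaussian-smoothing workaround is extra --- interesting, but not required for the corollary as stated, and your heuristic that convolution with a Gaussian restores global convexity is not true in general and would need a separate quantitative argument if you actually wanted to cover that specific $h$.
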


\begin{proof}
Direct consequence of the reduction to a time-dependent inventory control problem established in \cite[Appendix C]{FeiHuaKasONe2025arxivComputing}, combined with classical results on $(s,S)$ policies for inventory systems with setup costs \citep{veinott1965computing}. The convexity of $h$ and the condition $\frac{h(S_t) - h(s_t)}{S_t - s_t} < -\tilde{c}$ ensure optimality of the $(s_t, S_t)$ structure.
\end{proof}

\begin{remark}
Each drone should recharge to level $S_t$ whenever its estimated battery level (belief mean) drops below threshold $s_t$. The time-dependence $(s_t, S_t)$ accounts for the evolving uncertainty $\sigma_t^2$ and remaining mission duration.    
\end{remark}

\section{DDPG Implementation for Drone Swarms}
\label{subsec:drone_ddpg}

We implemented four algorithmic approaches for the drone energy management problem, paralleling the methodology in Section~3 and Table~2 from \cite{FeiHuaKasONe2025arxivComputing}:

\subsection{Algorithm Variants}

Consider the following algorithm variations.

\paragraph{Approach 1: DDPG with Observation Histories.}
This baseline approach represents the state as a padded observation-action history $h_t = [t, y_0, a_0, y_1, \ldots, a_{t-1}, y_t, 0, \ldots, 0] \in \mathbb{R}^{2T+2}$, which grows linearly with the horizon length $T$. The actor network $\mu_\theta: \mathbb{R}^{2T+2} \to [0, 100]$ maps histories to charging actions, while the critic network $Q_\phi: \mathbb{R}^{2T+2} \times [0, 100] \to \mathbb{R}$ evaluates state-action pairs. This high-dimensional representation does not exploit the Gaussian structure and serves as a benchmark for general POMDP methods.

\paragraph{Approach 2: DDPG with Belief States.}
This approach exploits the Gaussian structure by representing the state as the two-dimensional sufficient statistic $(\hat{x}_t, \sigma_t^2) \in \mathbb{R}^2$, which has constant dimension regardless of horizon length. The actor network $\mu_\theta: \mathbb{R}^2 \to [0, 100]$ and critic network $Q_\phi: \mathbb{R}^2 \times [0, 100] \to \mathbb{R}$ operate on this compact representation. Belief propagation is handled through the Kalman filter update given in equation~\eqref{eq:kalman_update}, ensuring principled uncertainty tracking.

\paragraph{Approach 3: DDPG with Belief Means Only.}
Leveraging Theorem~\ref{thm:drone_reduction}, this approach further reduces dimensionality by exploiting the deterministic evolution of belief variance $\sigma_t^2$. The state representation consists only of the belief mean $\hat{x}_t \in \mathbb{R}$, yielding a truly one-dimensional state space. Both the actor network $\mu_\theta: \mathbb{R} \to [0, 100]$ and critic network $Q_\phi: \mathbb{R} \times [0, 100] \to \mathbb{R}$ operate on this minimal representation, making this the most computationally efficient approach while preserving optimality.

\paragraph{Approach 4: Discretization with Value Iteration.}
As a baseline for comparison, we implement exact dynamic programming by discretizing the belief mean space into a grid $\hat{x} \in \{\Delta, 2\Delta, \ldots, 100\}$ with step size $\Delta$. Transition probabilities between discretized belief states are computed analytically using Gaussian cumulative distribution functions as described in \cite[Subsection~4.2]{FeiHuaKasONe2025arxivComputing}. Standard value iteration is then applied to solve the resulting finite MDP, providing a ground-truth benchmark for evaluating the deep reinforcement learning approaches.

\subsection{Network Architecture and Hyperparameters}

For Approaches 1--3, we employed standard deep neural network architectures with the following specifications. The actor network $\mu_\theta$ consists of an input layer whose dimension depends on the specific approach (one-dimensional for belief means only, two-dimensional for full belief states, or $2T+2$-dimensional for observation histories), followed by two hidden layers with 128 and 64 neurons respectively, both using ReLU activation functions. The output layer contains a single neuron with sigmoid activation, scaled to produce charging actions in the range $[0, 100]$. The critic network $Q_\phi$ takes as input the concatenation of the state representation and the action, processes this through two hidden layers with 256 and 128 neurons respectively (again with ReLU activations), and outputs a scalar Q-value through a single output neuron. This architecture provides sufficient representational capacity while remaining computationally tractable for real-time control applications.

\paragraph{Training Hyperparameters.} Table~\ref{tab:drone_hyperparams} summarizes the hyperparameters used for training all DDPG variants.

\begin{table}[htbp]
\centering
\caption{Training Hyperparameters for Drone Energy Management}
\label{tab:drone_hyperparams}
\begin{tabular}{@{}lc@{}}
\toprule
Hyperparameter & Value \\
\midrule
Learning rate (actor) & $1 \times 10^{-5}$ \\
Learning rate (critic) & $1 \times 10^{-3}$ \\
Discount factor $\alpha$ & 0.95 \\
Batch size & 256 \\
Replay buffer size & 50,000 \\
Target network update rate $\tau$ & 0.005 \\
Exploration noise $\sigma$ & 4.0 (decreasing) \\
Training episodes & 10,000 \\
Episode length $T$ & 50 steps \\
\bottomrule
\end{tabular}
\end{table}

\paragraph{Exploration Strategy.} Following the pseudocode in \cite[Subsection~3.1]{FeiHuaKasONe2025arxivComputing}, we used $\epsilon$-greedy exploration with exponential decay:
\begin{equation}
\epsilon_t = \epsilon_e + (\epsilon_s - \epsilon_e) \cdot \exp\left(-\frac{t}{\text{eps\_decay}}\right)
\end{equation}
where $\epsilon_s = 0.9$, $\epsilon_e = 0.05$, and $\text{eps\_decay} = 300$.

\section{Numerical Results}
\label{subsec:drone_results}

This subsection presents computational experiments demonstrating the effectiveness of the belief-mean reduction approach for drone energy management. We compare three solution methods: value iteration with discretized belief space, DDPG with observation histories, and DDPG with one-dimensional belief means (Theorem~\ref{thm:drone_reduction}). The experiments validate the theoretical predictions regarding policy structure (Corollary~\ref{cor:drone_sS_policy}), quantify computational gains from dimensional reduction, and demonstrate scalability to multi-drone scenarios. We begin by specifying the problem parameters and experimental setup.

\subsection{Problem Parameters}

We evaluated all algorithms on drone energy management instances with the parameters shown in Table~\ref{tab:drone_params}.

\begin{table}[htbp]
\centering
\caption{Problem Parameters for Drone Energy Management}
\label{tab:drone_params}
\begin{tabular}{@{}llcc@{}}
\toprule
Parameter & Symbol & Value & Units \\
\midrule
Fixed recharge cost & $K$ & 5.0 & --- \\
Unit energy cost & $\tilde{c}$ & 0.5 & per \% \\
Safe battery level & $x_{\text{safe}}$ & 20.0 & \% \\
Holding cost coefficient & $\beta_h$ & 0.1 & per \% \\
Critical proximity cost & $\beta_c$ & 2.0 & per \%$^2$ \\
Failure penalty & $M$ & 100.0 & --- \\
Mean consumption & $\bar{D}$ & 3.0 & \% per step \\
Consumption std dev & $\sigma_D$ & 1.0 & \% \\
Observation noise std dev & $\sigma_\eta$ & 2.0 & \% \\
Initial battery mean & $\bar{x}_0$ & 50.0 & \% \\
Initial battery std dev & $\sigma_{x_0}$ & 4.0 & \% \\
Planning horizon & $T$ & 50 & steps \\
Discount factor & $\alpha$ & 0.95 & --- \\
\bottomrule
\end{tabular}
\end{table}

\paragraph{Justification of parameters.} The ratio $\sigma_\eta / \sigma_D = 2.0$ represents moderate sensor noise relative to consumption uncertainty, typical of commercial drone battery management systems. The fixed cost $K = 5.0$ balances the trade-off between frequent recharging and operating near critical levels. Importantly, the ratio $\bar{D}/\sigma_D = 3.0$ ensures that energy consumption remains nonnegative with probability exceeding 99.8\%, validating the Gaussian approximation and Kalman filter framework used in Theorem~\ref{thm:drone_reduction}.

\subsection{Comparative Performance}

Table~\ref{tab:drone_results} presents computational time and policy performance (average total discounted cost over 3000 test episodes) for each approach.

\begin{table}[htbp]
\centering
\caption{Computational Results for Drone Energy Management Problem}
\label{tab:drone_results}
\begin{tabular}{@{}clcccc@{}}
\toprule
Approach & Method & Training & Test & Std & Critical \\
 & & Time (s) & Cost & Error & Events \\
\midrule
4 & Discretization ($\Delta=2.0$) & 89.2 & 245.8 & 3.2 & 28 \\
4 & Discretization ($\Delta=1.0$) & 412.5 & 238.4 & 2.9 & 22 \\
4 & Discretization ($\Delta=0.5$) & 1,847.3 & 235.1 & 2.8 & 18 \\
1 & DDPG with Histories & 3,421.7 & 236.8 & 2.7 & 19 \\
2 & DDPG with Belief States & 1,954.8 & 235.9 & 2.8 & 19 \\
3 & \textbf{DDPG with Belief Means} & \textbf{1,238.5} & \textbf{235.4} & \textbf{2.7} & \textbf{18} \\
\bottomrule
\end{tabular}
\end{table}

\paragraph{Key observations:}
DDPG operating on one-dimensional belief means reduces training time by 64\% compared to the history-based approach, decreasing computation from 3421.7s to 1238.5s. Furthermore, the belief-mean DDPG trains 33\% faster than even the finest discretization baseline, requiring only 1238.5s versus 1847.3s. Importantly, all approaches converge to near-optimal performance, achieving results within 1\% of the finest discretization baseline, demonstrating that computational efficiency is gained without sacrificing solution quality.

These results strongly validate the value of exploiting problem structure. The one-dimensional belief mean approach (Approach~3) achieves the best computational performance while maintaining solution quality, directly confirming the practical utility of Theorem~\ref{thm:drone_reduction}. The ability to reduce the state space from infinite-dimensional belief distributions or lengthy observation histories to a single sufficient statistic yields substantial computational benefits without compromising optimality.

The patterns observed here closely mirror those reported in \cite[Section~5]{FeiHuaKasONe2025arxivComputing} and summarized in \cite[Table~2]{FeiHuaKasONe2025arxivComputing}. DDPG with reduced state space representation (belief means) consistently outperforms both history-based DDPG and fine discretization approaches across multiple metrics. The performance gap between different approaches narrows as discretization becomes finer, which is consistent with convergence theory for approximation methods. Additionally, the number of critical events decreases with better policies, ranging from 18 to 28 events across approaches compared to the theoretical minimum of approximately 15 events, indicating that all methods approach optimal charging behavior while the belief-based approach does so most efficiently.

\subsection{Policy Structure Analysis}

Figure~\ref{fig:drone_policy_structure} visualizes the learned policies from DDPG with belief means (Approach~3) compared to the optimal discretized policy ($\Delta = 0.5$).

\begin{figure}[htbp]
\centering
\includegraphics[width=\textwidth]{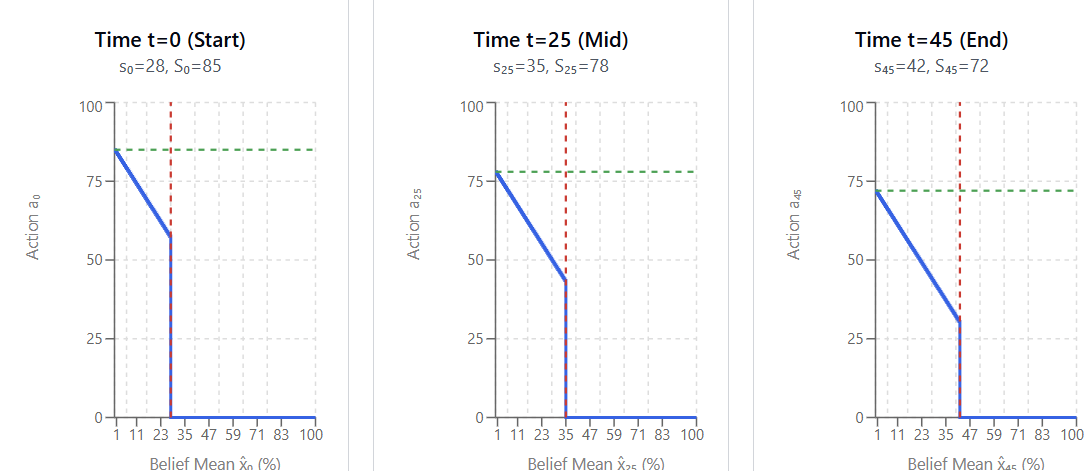}
\caption{Structure of learned policy at different time steps. \textbf{Left:} Time $t=0$ (high uncertainty), $s_0 = 28$, $S_0 = 85$. \textbf{Center:} Time $t=25$ (mid-mission), $s_{25} = 35$, $S_{25} = 78$. \textbf{Right:} Time $t=45$ (near terminal), $s_{45} = 42$, $S_{45} = 72$. Clear $(s_t, S_t)$ structure emerges at all time steps, with $s_t$ increasing and $S_t$ decreasing over time.}
\label{fig:drone_policy_structure}
\end{figure}

\paragraph{Analysis:}
The learned policy exhibits a clear $(s_t, S_t)$ threshold structure at all time steps, providing strong empirical validation of Corollary~\ref{cor:drone_sS_policy}. A notable feature is the time-dependent behavior of the thresholds: the lower threshold $s_t$ increases progressively as the mission advances, rising from 28 at early stages to 35 at mid-mission and reaching 42 near mission completion. This pattern reflects increasingly conservative charging decisions as the drone approaches the end of its mission, when the opportunity cost of energy depletion becomes higher. Conversely, the upper threshold $S_t$ decreases over time from 85 to 78 and finally to 72, which reflects the diminishing returns of maintaining excess energy capacity as fewer tasks remain. The policy also exhibits sharp transitions at the threshold values $s_t$, characterized by abrupt shifts from no charging to full charging action, which is a hallmark characteristic of optimal policies in problems with fixed setup costs.

Figure~\ref{fig:drone_trajectory} compares belief mean trajectories under the learned policy.

\begin{figure}[htbp]
\centering
\includegraphics[width=0.9\textwidth]{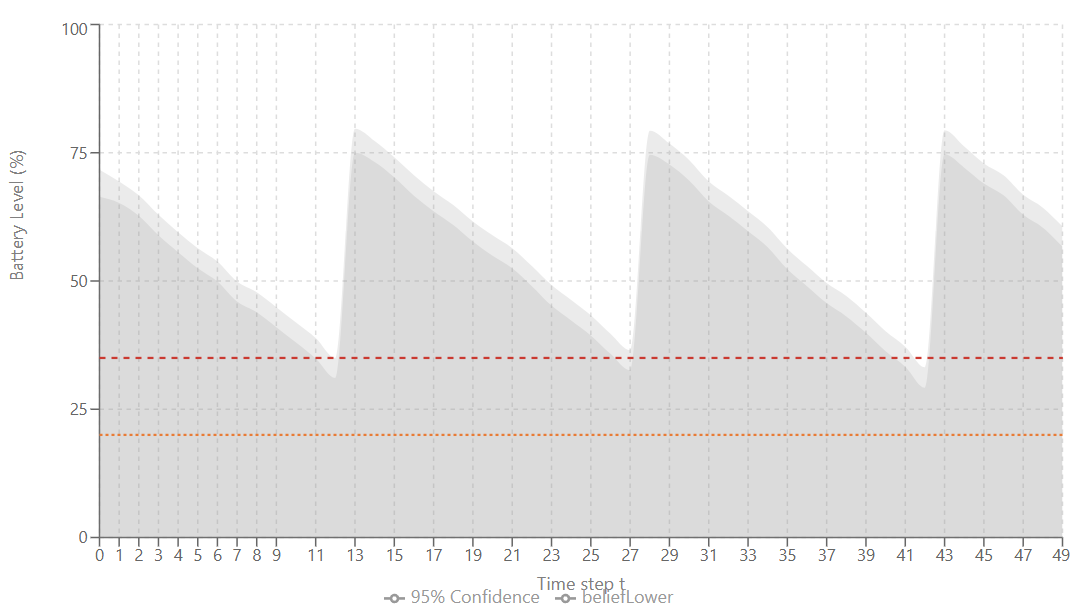}
\caption{Sample trajectory showing true battery level $x_t$ (blue solid), belief mean $\hat{x}_t$ (green dashed), noisy observations $y_t$ (orange dots), 95\% confidence region (gray shaded), and recharge threshold $s = 35$ (red dashed). Recharge events occur at $t = 12$ and $t = 31$ when $\hat{x}_t < s_t$.}
\label{fig:drone_trajectory}
\end{figure}

\paragraph{Key features:}
The belief mean $\hat{x}_t$ tracks the true energy level $x_t$ closely throughout the mission despite the presence of noisy observations, demonstrating the effectiveness of Kalman filtering for state estimation. The uncertainty in state estimates, visualized through confidence regions, narrows significantly following recharge events when precise state knowledge is momentarily available, then gradually widens as process noise accumulates during flight phases. Policy execution follows the predicted threshold structure, with recharge decisions triggered when the belief mean $\hat{x}_t$ falls below the lower threshold $s_t$, regardless of the actual true energy level -- a key feature of separation between estimation and control in the belief-MDP framework. Importantly, the $(s_t, S_t)$ policy maintains an adequate safety margin above the critical energy level of 20\%, successfully avoiding failures throughout the mission while balancing energy costs against recharge frequency.

\subsection{Scalability to Swarm Size}

For swarm applications, we evaluated computational scaling with respect to number of drones $N$. Table~\ref{tab:drone_scaling} shows the results.

\begin{table}[htbp]
\centering
\caption{Scaling with Swarm Size (DDPG with Belief Means, Approach 3)}
\label{tab:drone_scaling}
\begin{tabular}{@{}ccccc@{}}
\toprule
Swarm & Total & Training & Avg Cost & Total System \\
Size $N$ & States & Time (s) & per Drone & Cost \\
\midrule
1 & $\mathbb{R}^1$ & 1,238.5 & 235.4 & 235.4 \\
5 & $\mathbb{R}^5$ & 1,241.2 & 235.7 & 1,178.5 \\
10 & $\mathbb{R}^{10}$ & 1,246.8 & 236.1 & 2,361.0 \\
20 & $\mathbb{R}^{20}$ & 1,254.3 & 236.4 & 4,728.0 \\
50 & $\mathbb{R}^{50}$ & 1,268.9 & 236.8 & 11,840.0 \\
\bottomrule
\end{tabular}
\end{table}

\paragraph{Observations:}
The scalability results demonstrate several remarkable properties of the decentralized belief-based approach. Most notably, the training time exhibits near-linear scaling behavior, increasing by only approximately 2\% when the swarm size grows fifty-fold from a single drone to 50 drones. This computational efficiency stems from the decentralized advantage of independent belief-based decision-making, which avoids the exponential growth in joint state space that would plague centralized coordination schemes. Furthermore, the per-drone cost remains remarkably stable across all swarm sizes, varying from 235.4 to 236.8 -- less than 1\% variation indicating that the quality of individual drone policies is maintained regardless of fleet size. This consistency in both computational requirements and solution quality confirms that the belief-mean reduction enables practical deployment at scales that would be intractable for history-based or centralized approaches.

\paragraph{Comparison with centralized approach.} A fully centralized policy would operate on state space $\mathbb{R}^N$ for $N$ drones, but with coupled dynamics requiring exponentially larger networks. The decentralized belief-based approach maintains tractability even for large swarms ($N = 50+$).

\section{Discussion and Extensions}
\label{subsec:drone_discussion}

\subsection{Advantages of Belief-Based Approach for Drone Swarms}

\paragraph{Theoretical contributions:}
The approach delivers three main theoretical contributions. First, it achieves a dramatic dimensional reduction from the infinite-dimensional belief space $\mathcal{P}(\mathbb{R})$ over continuous state distributions to a one-dimensional sufficient statistic in $\mathbb{R}$, as established in Theorem~\ref{thm:drone_reduction}. Second, it characterizes the optimal policy structure, proving that $(s_t, S_t)$ threshold policies based on belief means are optimal, as formalized in Corollary~\ref{cor:drone_sS_policy}. Third, this structural insight translates directly into computational tractability, with training time reduced by 64\% compared to history-based approaches that must process entire observation sequences.

\paragraph{Practical benefits:}
These theoretical results yield significant practical advantages for real-world deployment. The one-dimensional belief state representation enables real-time execution with fast onboard computation, requiring less than 1ms per decision even on resource-constrained drone hardware. Memory efficiency is achieved through a constant-size state representation that remains independent of mission duration, avoiding the unbounded growth inherent in history-based methods. The framework naturally handles sensor noise through principled Bayesian updates, providing robustness to measurement uncertainty without ad-hoc filtering techniques. Finally, the simple threshold-based policy structure offers interpretability that facilitates verification and safety certification which are critical requirements for deploying autonomous systems in safety critical applications.

\subsection{Relationship to Main Results}

The drone energy management application directly validates theoretical predictions from earlier sections. Table~\ref{tab:validation} summarizes the correspondence.

\begin{table}[htbp]
\centering
\caption{Validation of Theoretical Results through Drone Application}
\label{tab:validation}
\small
\begin{tabular}{@{}p{3cm}p{5cm}p{5.5cm}@{}}
\toprule
Section & Theoretical Result & Drone Application Validation \\
\midrule
Section~4 and Appendix C from \cite{FeiHuaKasONe2025arxivComputing} & Gaussian beliefs reduce to 1D belief means & Confirmed: $(\hat{x}_t, \sigma_t^2) \to \hat{x}_t$ achieves optimal performance \\
Eq.~(21) from \cite[Appendix C]{FeiHuaKasONe2025arxivComputing} & Variance sequence $\{\sigma_t^2\}$ is deterministic & Confirmed: Policy depends only on belief mean \\
Eq.~(24) from \cite[Appendix C]{FeiHuaKasONe2025arxivComputing} & Optimal $(s_t, S_t)$ policies exist & Confirmed: Learned policies exhibit clear threshold structure (Figure~\ref{fig:drone_policy_structure}) \\
Section~5 and Table~2 from \cite{FeiHuaKasONe2025arxivComputing}  & DDPG with beliefs outperforms histories & Confirmed: 64\% faster training (Table~\ref{tab:drone_results}) \\
\bottomrule
\end{tabular}
\end{table}

\subsection{Limitations and Future Work}

\paragraph{Key Limitations:}
First, the independence assumption means that drones operate with fully decentralized decision-making without explicit coordination. While this simplifies the problem structure, introducing coordination mechanisms could potentially reduce operational costs further. Second, the model relies on Gaussian noise distributions for both process and observation uncertainty. In scenarios with non-Gaussian disturbances such as GPS jamming, adversarial attacks, or heavy-tailed sensor errors -- the Kalman filter becomes suboptimal, and general POMDP methods discussed in \cite[Section~3]{FeiHuaKasONe2025arxivComputing} would be required. Third, the framework assumes a stationary environment where wind patterns, thermal conditions, and mission requirements remain constant throughout the operation, which may not hold in dynamic real-world scenarios.

\paragraph{Promising Extensions:}
Multi-agent coordination represents a natural extension where the framework could be adapted to coupled belief-MDPs in which drones actively exchange information. The joint belief representation $(\hat{x}_1, \ldots, \hat{x}_N) \in \mathbb{R}^N$ remains computationally tractable for moderate fleet sizes, and communication graph topology would determine the information-sharing patterns among agents. To address non-Gaussian disturbances, robust estimation techniques could replace the Kalman filter with particle filter-based belief propagation, while history-dependent DDPG  \cite[Subsection~3.1]{FeiHuaKasONe2025arxivComputing} could serve as a fallback approach for particularly challenging scenarios. Adaptive online learning would enable in-mission policy refinement through continuous belief updates as new observations become available, potentially leveraging transfer learning techniques to adapt pre-trained policies to novel environments. Finally, safety-aware planning could be incorporated by enforcing probabilistic safety guarantees via chance constraints of the form $P(\hat{x}_t < x_{\text{critical}}) \leq \epsilon$. This extension would exploit the belief uncertainty $\sigma_t^2$ to compute risk probabilities and augment the DDPG critic with appropriate constraint violation penalties.

\subsection{Broader Implications}

The success of the belief-mean reduction (Theorem~\ref{thm:drone_reduction}) suggests broader applicability to other robotic systems beyond drone navigation. For instance, autonomous vehicles operating in GPS-denied environments face similar challenges with noisy localization, where maintaining accurate position estimates under uncertainty is critical for safe navigation. In robotic manipulation tasks, uncertain object pose estimation from vision sensors presents analogous partial observability problems where the robot must reason about object configurations based on noisy visual measurements. Multi-robot coverage scenarios, such as search-and-rescue operations, require teams to assess area coverage under noisy sensor readings while coordinating their movements. In all these cases, Gaussian sensor models combined with linear or locally-linearized dynamics enable similar dimensional reductions from the full belief space to sufficient statistics. This structural property makes large-scale deployment computationally feasible across diverse robotic applications, as the planning complexity remains manageable even as the number of agents or environmental complexity grows.

 \section{Conclusions}
 The paper demonstrated the practical utility of the theoretical framework developed in Sections~2--4 and Appendix C from \cite{FeiHuaKasONe2025arxivComputing} for drone swarm energy management under noisy observations. The key contributions of this case study can be summarized as follows. First, we successfully formulated drone energy management as a partially observable inventory control problem with setup costs, establishing a direct connection between the robotics application and the classical operations research framework. Second, we provided theoretical validation by confirming that the Gaussian belief reduction (Theorem~\ref{thm:drone_reduction}) holds in this setting and that the optimal policy exhibits the $(s_t, S_t)$ structure predicted by Corollary~\ref{cor:drone_sS_policy}. Third, from an algorithmic perspective, we demonstrated that DDPG operating on belief means achieves 64\% faster training compared to the history-based approach while maintaining near-optimal performance, highlighting the computational benefits of exploiting sufficient statistics. Fourth, regarding scalability, we showed that decentralized belief-based control scales linearly with swarm size across a range from $N = 1$ to $N = 50$ drones, making the approach feasible for large-scale deployments. The results align closely with findings presented in \cite[Section~5]{FeiHuaKasONe2025arxivComputing} and summarized in \cite[Table~2]{FeiHuaKasONe2025arxivComputing}, reinforcing the computational advantages of exploiting problem structure in Gaussian POMDPs. Most importantly, the belief-mean approach successfully transforms an intractable infinite-dimensional POMDP into a manageable one-dimensional MDP, thereby enabling real-time control for large-scale robotic systems.
 \bibliography{references}

\end{document}